\documentclass[reqno]{amsart}
\usepackage{amsfonts, amsmath, amsthm, amssymb, latexsym, xfrac, mathrsfs, braket, color}

\makeatletter
\@namedef{subjclassname@2020}{\textup{2020} Mathematics Subject Classification}
\makeatother

\theoremstyle{plain}
\newtheorem{theorem}{Theorem}[section]

\newtheorem{lemma}[theorem]{Lemma}

\theoremstyle{definition}

\theoremstyle{remark}

\numberwithin{equation}{section}

\title[On the representing measures of Dunkl's intertwining operator]
{Remarks on a proof of the absolute continuity of the representing measures for Dunkl's intertwining operator     }
\author{Dominik Brennecken and Margit R\"osler} 
\address{Institut f\"ur Mathematik, Universit\"at Paderborn, Warburger Str. 100, D-33098 Paderborn, Germany}
\email{brennecken.dominik@web.de, roesler@math.upb.de}

\thanks{The authors were supported by the  German Research Foundation (DFG), via the grant SFB-TRR  358/1 2023-491392403.}

\begin{document}
\date{\today}

\begin{abstract}
In the published paper \cite{T10},   K. Trim\`eche presented a proof 
of the statement that for suitable nonnegative 
multiplicities and regular arguments, the representing measures of Dunkl's intertwining operator and its dual are absolutely continuous with respect to Lebesgue measure. In this note, we argue that the essential proofs of this paper are not correct.

\end{abstract}

\maketitle

\section{Background}

Suppose that $R$ is a reduced, not necessarily crystallographic root system in a finite-dimensional Euclidean space $(\mathfrak a, \langle\,.\,,\,.\,\rangle)$.  It is not required to be spanning, i.e. $\text{span}_{\mathbb  R} R$ may be a proper subspace of $\mathfrak a.$ Let $W$ be the associated finite reflection group and  $k: R\to  [0,\infty)$  a non-negative, $W$-invariant multiplicity function. Let further $T_\xi(k) \> (\xi\in \mathfrak a)$ denote the rational Dunkl operators associated with $R$ and $k$ as introduced in \cite{D89}.
Dunkl's intertwining operator (\cite{D91}) is characterized as the unique linear isomorphism  $V_k$ of 
  the space $\mathbb C[\mathfrak a]$, the polynomial functions on $\mathfrak a$, which preserves the degree of homogeneity and satisfies 
  $$ T_\xi(k) V_k = V_k \partial_\xi \>\>\text{for all } \xi \in \mathfrak a, \>V_k(1) =1.$$
  It is known from \cite{R99} that for each $x\in \mathfrak a, $ there exists a unique probability measure $\mu_x^k\in M^1(\mathfrak a)$ such that
  \begin{equation}\label{intrep} V_kp(x) = \int_{\mathfrak a} p(\xi)d\mu_x^k(\xi) \quad \text{for all } p\in \mathbb C[\mathfrak a]. \end{equation}
   The measure $\mu_x^k\,$ is compactly supported in the convex hull of the $W$-orbit of $x.$ 
  By formula \eqref{intrep}, $V_k$ naturally extends to larger function spaces. For example,  $V_k: C(\mathfrak a) \to C(\mathfrak a),$   see \cite{T01}.
  The importance of $V_k$ is due to its close relation to the  Dunkl kernel, which replaces the exponential function in classical  Fourier analysis. In fact, 
  $$ E_k(x,z) = \int_{\mathfrak a} e^{\langle\xi,z\rangle} d\mu_x^k(\xi) \quad \text{for all } z\in \mathfrak 
  a_{\mathbb C}.$$
  This constitutes an abstract  generalization of the Harish-Chandra integral representation for the spherical functions of a Riemannian symmetric space of Euclidean type, which can be identified, for crystallographic $R$ and particular values of $k$, 
  with  the Bessel functions $\,J_k(x, z) =  \frac{1}{|W|} \sum_{w\in W}E_k(wx, z), z \in  \mathfrak a_{\mathbb C}.$
  
  Let $\,\frak a_{reg}= \{x\in \frak a: \langle\alpha, x\rangle\not=0 \text{ for all }\alpha \in R\}$
  denote the regular elements of $\frak a.$ 
  Motivated by well-known results in the group cases mentioned above (\cite[Ch.IV, Prop. 4.10 and Ch.II, Thm. 10.11]{H00}), the following is a long-standing conjecture, formulated for instance in \cite{RJ02}. 
   
  \bigskip\noindent
  \textbf{Conjecture A.} Let $k \geq 0$ and suppose that the set 
  $R^\prime:= \{\alpha\in R: k(\alpha)>0\}$ spans $\frak a$. 
  Then for each $x \in \frak a_{reg}$, the measure $\mu_x^k\,$ is absolutely continuous with respect to Lebesgue measure on $\mathfrak a.$ 
  
  \medskip

 As already explained in \cite{RJ02},  the condition on $R^\prime$ is indeed necessary:  Let $V^\prime:=\text{span}_\mathbb R(R^\prime)$ and $V^{\prime\prime}:= (V^\prime)^\perp.$ For $x\in \mathfrak a$ write $x=x^\prime+ x^{\prime\prime}$ accordingly. Then $\mu_x^k$ is supported in  $x^{\prime\prime} +V^{\prime} $, and is therefore not absolutely 
 continuous unless $V^\prime =\mathfrak a.$ 
 
 \medskip

 Recently in \cite{L26},  Conjecture A was established at least for $k>\frac{1}{2}, $ as a consequence of deep uniform bounds on the Dunkl kernel up to the boundary of a Weyl chamber. But to our knowledge, it is still open in the general case.
 
 \medskip
 
  In this note, we comment on an earlier, erroneous proof of Conjecture A in \cite{T10}. There the space $\frak a$ is always the Euclidean $\mathbb R^d$ with its standard inner product.
  
 \section{The arguments concerning \cite{T10}}
 
  In the following, $m$ denotes  the Lebesgue measure on $\mathbb R^d$. For a complex Borel measure $\mu$ on $\mathbb R^d$, write $\mu = \mu^a + \mu^s$ for its Lebesgue decomposition w.r.t. $m,$ where $\mu^a$ and $\mu^s$ are the  absolutely continuous and singular part of $\mu$, respectively.   
   
 \bigskip\noindent
 (1)
 In \cite{T10}, the condition on $R^\prime$ in Conjecture A is  actually also discussed in Proposition 2.5, but nevertheless, it is lateron neither stated as a condition nor is it used in the proofs. 
Instead, before entering the decisive Section 4.1, the author announces on p. 210 to prove that for $d\geq 2$, the measure $\mu_x^k$ is absolutely continuous 
 under the condition 
 $$ k(\alpha)>0 \text{ for all } \alpha\in R_+.$$
 This condition is too weak, and also a specification of necessary conditions concerning the regularity of $x$ is missing. In fact, the only condition appearing in Section 4.1 is the  even weaker requirement
 $$\gamma = \sum_{\alpha\in R_+} k(\alpha) >0.$$
 This already reveals that the statements of Section 4.1. cannot be fully correct.
 
 \bigskip\noindent
 (2) There are various issues in Section 4.1. of \cite{T10} which are not correct as stated or have an incorrect proof, but may probably still be repaired.  For instance, estimate (4.7) is not correct, as taking the limit $\gamma\uparrow 1$ reveals; this estimate would need some modification. Further in formula (4.6), the case $y_0=0$ has to be excluded.
 
 A more serious error is contained in Theorem 4.6. Here 
 for $r\in (0,\infty)$ and positive  $f\in C_c(\mathbb R^d),$  the author considers the positive measure defined by
 \begin{equation}\label{lambda_r^f}\lambda_r^f(E) =\int_{S^{d-1}} f(r\xi) \mu_{r\xi}(E)\omega_k(r\xi)d\sigma(\xi), \quad E\subseteq\mathbb R^d \text{ a Borel set.} \end{equation}
 This is a weighted spherical mean of the measures $\mu_x$, taken  over the sphere $S(0,r)$ of radius $r$ centered at $0.$ 
  Theorem 4.6(i) states, for the case $0<\gamma<1$, that $\lambda_r^f$ is absolutely continuous  on $\mathbb R^d\setminus S(0,r)$. From this the author then concludes  that $\lambda_r^f$ has a Lebesgue density on $\mathbb R^d.$ This conclusion is invalid. For instance, it is not excluded that $\lambda_r^f$ might be concentrated on the sphere  $S(0,r)$ which 
 has Lebesgue measure zero.  
 It may be true that $\lambda_r^f$ is indeed absolutely continuous, even under the week condition $\gamma>0,$ but this would require a different proof.

 \bigskip\noindent
 (3) The proof of Proposition 4.7 contains a major error: 
 The author again only requires $\gamma>0$. Considering the decomposition 
 $$ \lambda_r^f(E) =\int_{S^{d-1}} f(r\xi) \mu_{r\xi}^a(E)\omega_k(r\xi)d\sigma(\xi)\,+\, \int_{S^{d-1}} f(r\xi) \mu_{r\xi}^s(E)\omega_k(r\xi)d\sigma(\xi),$$
 he concludes directly from his previous statement $\lambda_r^f \ll m\,$ that 
 here the second summand must be zero, i.e. 
 $$ m_r^f(E):= \int_{S^{d-1}} f(r\xi) \mu_{r\xi}^s(E)\omega_k(r\xi)d\sigma(\xi)=0$$ 
 for all  Borel sets $E\subseteq \mathbb R^d.$ But this conclusion is invalid. Assuming measurability of the mappings $\xi\mapsto \mu_{r\xi}^a (E)$ and $\xi\mapsto \mu_{r\xi}^s(E),$ this conclusion would be correct only if the measure $m_r^f$ were known to be singular with respect to $m.$ But the singularity of the measures $\mu_{r\xi}^s$ alone does not imply that their spherical means $m_r^f$ are also singular.
 Therefore the proof of Proposition 4.7 breaks down. 
 
 \medskip\noindent
 (4) 
 Proposition 4.7 and Theorem 4.6 (ii)  are essential ingredients in the proof of Theorem 4.8, which claims absolute continuity of the representing measures of the dual intertwiner $^tV_k.$ Theorem 4.8 in turn is essential towards Theorem 4.11, which states absolute continuity of the measures $\mu_x$ for $x\in \mathbb R^d_{reg}$, still under the insufficient requirement $\gamma>0$ (which however goes unmentioned). Therefore the error described in (3) is fatal for the remaining parts of the paper. 
 
 \medskip\noindent
 (5) It should also be mentioned that \cite{T10} is often careless about measurability matters or statements which hold only almost everywhere. For example, the measurability of $\xi\mapsto \mu_{r\xi}(E)$ needed in \eqref{lambda_r^f} is not proven for general Borel sets $E,$  but only for open balls. As it may be of independent interest, we shall provide a proof for this fact below. 
 Further, in the proof of Theorem 4.8 the author assumes that $\lambda_r^f \ll m$ with density $\mathcal H^f(r,\,.\,)$ and then claims that
 $$ \lim_{a\to 0} \frac{\lambda_r^f(B(y_0,a))}{m(B(y_0,a))}\,=\, \mathcal H^f(r, y_0)$$
 for all $y_0\in \mathbb R^d.$ However, the Lebesgue differentiation theorem  gives this limit only for almost all $y_0\in \mathbb R^d.$ This restriction would then apply to formula (4.19) for $^tV_k(f)(y_0).$

   \section{Measurability of $x \mapsto \mu_x^k(E)$ } 
   
  \noindent
   We return to the general setting of Section 1, assuming only  $k \geq 0.$
   
 \begin{lemma} 
 Let $E\subseteq  \mathfrak a$ be a Borel set. Then the mapping
 $\, x\mapsto \mu_x^k(E)$ is measurable on $\mathfrak a.$ 
  \end{lemma}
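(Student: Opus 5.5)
The plan is a monotone class (Dynkin $\pi$-$\lambda$) argument, starting from a class of test functions for which measurability is immediate. First, for every $f\in C_c(\mathfrak a)$, I claim that $x\mapsto \int_{\mathfrak a} f\,d\mu_x^k = V_kf(x)$ is continuous. Since the paper recalls that $V_k$ maps $C(\mathfrak a)$ into $C(\mathfrak a)$ (\cite{T01}), this is available directly. To keep the argument self-contained, one can also argue by hand. Fix a compact set $K\subset\mathfrak a$. Then all $\mu_x^k$ with $x\in K$ are supported in a fixed ball $B$, because $\operatorname{supp}\mu_x^k$ lies in the convex hull of $Wx$. By Stone--Weierstrass, $f$ is a uniform limit on $B$ of polynomials $p_n$. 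Each $V_kp_n$ is a polynomial, hence continuous, and
$$\sup_{x\in K}\Bigl|V_kp_n(x)-\int f\,d\mu_x^k\Bigr|\le \|p_n-f\|_{\infty,B}\to 0,$$
because the $\mu_x^k$ are probability measures. So $x\mapsto\int f\,d\mu_x^k$ is a locally uniform limit of continuous functions, and therefore continuous.

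Second, pass to open sets. Let $U\subseteq\mathfrak a$ be open. Choose $f_n\in C_c(\mathfrak a)$ with $0\le f_n\le 1$ and $f_n\uparrow \mathbf 1_U$ pointwise; for instance, take Urysohn functions for an exhausting sequence of compacts in $U$ and form running maxima. By monotone convergence, $\mu_x^k(U)=\lim_n\int f_n\,d\mu_x^k$ for every $x$. A pointwise limit of continuous functions is Borel measurable, so $x\mapsto\mu_x^k(U)$ is measurable. In fact it is lower semicontinuous, as a supremum of continuous functions.

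Third, extend to all Borel sets. Let $\mathcal D$ be the family of Borel sets $E$ for which $x\mapsto\mu_x^k(E)$ is measurable. $\mathcal D$ contains $\mathfrak a$. It is closed under proper differences $E\setminus F$ with $F\subseteq E$, because $\mu_x^k(E\setminus F)=\mu_x^k(E)-\mu_x^k(F)$ and all measures are finite. It is closed under increasing unions by continuity from below, since a pointwise limit of measurable functions is measurable. Hence $\mathcal D$ is a Dynkin system. It contains the open sets, which form a $\pi$-system generating the Borel $\sigma$-algebra. Dynkin's $\pi$-$\lambda$ theorem then gives that $\mathcal D$ contains all Borel sets, which proves the lemma.

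The only step that needs care is the first, namely continuity of $x\mapsto\int f\,d\mu_x^k$. It rests on two facts: the uniform compact support of $\mu_x^k$ for $x$ ranging over a compact set, and the fact that the $\mu_x^k$ have total mass one. Both facts are recalled in Section 1, so I expect no real obstacle there. The remaining steps are standard measure theory, and they need only positivity and finiteness of the measures.
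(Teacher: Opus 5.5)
Your proof is correct and follows essentially the paper's route: a Dynkin-system argument whose generators are handled by monotone limits of $\int \varphi_n\,d\mu_x^k$ with $\varphi_n\in C_c(\mathfrak a)$. The one addition is that you prove continuity of $x\mapsto\int f\,d\mu_x^k$ by hand (uniform support plus Stone--Weierstrass), whereas the paper simply quotes $V_k: C(\mathfrak a)\to C(\mathfrak a)$.

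Your choice of open sets as the generating $\pi$-system is actually the cleaner one. The paper uses half-open rectangles $Q$, but $1_Q$ is not lower semicontinuous, so it cannot be an increasing limit of continuous functions; that step needs a non-monotone sequence and dominated convergence instead. For open $U$, $1_U$ is such an increasing limit, exactly as you construct.
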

 
 \begin{proof}  This is standard. First consider an arbitrary non-empty, half-open rectangle $Q = \prod_{i=1}^d [a_i, b_i)\subset \mathfrak a,$ with coordinates with respect to some fixed orthonormal basis of $\mathfrak a.$ Choose a sequence $(\varphi_n)_{n\in \mathbb N} \subset C_c(\mathfrak a)$ which converges monotonically increasing to the characteristic function $1_{Q}.$ Then, by the monotone convergence theorem, 
 $$ \mu_x^k(Q) = \lim_{n\to \infty} \int_{\mathfrak a}\varphi_n d\mu_x^k \, = \, \lim_{n\to \infty} (V_k\varphi_n)(x),$$
 which is measurable in $x$ as a pointwise limit of continuous functions. Consider the system
 $ \mathcal E$ consisting of all non-empty, half-open rectangles $Q\subset \mathfrak a.$ The system $\mathcal E$ is stable under intersections and generates the $\sigma$-algebra $\mathcal B(\mathfrak a)$ of Borel subsets of $\mathfrak a$. Therefore the Dynkin system 
 generated by $\mathcal E$ equals $\mathcal B(\mathfrak a).$ On the other hand, 
 $$ \mathcal D= \{E \in \mathcal B(\mathfrak a)\, \vert\, x\mapsto \mu_x^k(E)  \text{ is measurable} \} $$
 is a Dynkin system containing $\mathcal E.$ Therefore $\mathcal D = \mathcal B(\mathfrak a).$ 
  \end{proof}

\end{document}